\newtheorem{theorem}{Theorem}[section]
\newtheorem{lemma}[theorem]{Lemma}
\begin{document}

\onehalfspace

\title{On some Graphs with a Unique Perfect Matching}

\author{S. Chaplick$^1$
\and M. F\"{u}rst$^2$
\and F. Maffray$^3$
\and D. Rautenbach$^2$}

\date{}

\maketitle

\begin{center}
$^1$ Lehrstuhl f\"{u}r Informatik I, Universit\"{a}t W\"{u}rzburg, Germany, \texttt{steven.chaplick@uni-wuerzburg.de}\\[3mm]
$^2$ Institut f\"{u}r Optimierung und Operations Research, Universit\"{a}t Ulm, Germany, \{\texttt{maximilian.fuerst,dieter.rautenbach}\}\texttt{@uni-ulm.de}\\[3mm]
$^3$ CNRS, Laboratoire G-SCOP, Universit\'{e} de Grenoble-Alpes, France,
\texttt{frederic.maffray@grenoble-inp.fr}
\end{center}

\begin{abstract}
We show that deciding whether a given graph $G$ of size $m$ has a unique perfect matching as well as finding that matching, if it exists, can be done in time $O(m)$ if $G$ is 
either a cograph, 
or a split graph,  
or an interval graph, 
or claw-free.
Furthermore, we provide a constructive characterization of the claw-free graphs with a unique perfect matching.
\end{abstract}

{\small 
\begin{tabular}{lp{13cm}}
{\bf Keywords:} unique perfect matching; claw-free
\end{tabular}
}

\section{Introduction}

Bartha \cite{ba} conjectured that a unique perfect matching of a given graph $G$ of size $m$, if it exists, can always be found in $O(m)$ time.
Gabow, Kaplan, and Tarjan \cite{gakata} describe a $O(m\log^4m)$ algorithm for this problem. 
Furthermore, they show that, if apart from $G$, some perfect matching $M$ is also part of the input, 
then one can decide the uniqueness of $M$ in $O(m)$ time.
Since maximum matchings can be found in linear time for 
chordal bipartite graphs \cite{ch},
cocomparability graphs \cite{menini},
convex bipartite \cite{stye}, and
cographs \cite{copest,yuya},
also deciding whether these graphs have a unique perfect matching,
as well as finding the unique perfect matching, if it exists,
is possible in linear time.
Also for strongly chordal graphs given a strong elimination order \cite{daka},
a maximum matching can be found in linear time, and the same conclusion applies.
Levit and Mandrescu \cite{lema} showed that unique perfect matchings can be found in linear time for K\"{o}nig-Egerv\'{a}ry graphs and unicyclic graphs.

We contribute some structural and algorithmic results concerning graphs with a unique perfect matching.
First, we extend a result from \cite{gohile} to cographs and split graphs,
which leads to a very simple linear time algorithm deciding the existence of a unique perfect matching, and finding one, if it exists.
For interval graphs, we describe a linear time algorithm that determines a perfect matching, 
if the input graph has a unique perfect matching.
Similarly, for connected claw-free graphs of even order, we describe a linear time algorithm that determines a perfect matching.
Together with the result from \cite{gakata} this implies that for such graphs the existence of a unique perfect matching can be decided in linear time.
Finally, we give a constructive characterization of claw-free graphs with a unique perfect matching.

\section{Results}

For a graph $G$, we say that a set $U=\{ u_1,\ldots,u_k\}$ {\it forces a unique perfect matching in $G$}
if $n(G)=2k$, and $d_{G_i}(u_i)=1$ for every $i\in [k]$, where $G_i=G-\bigcup\limits_{j=1}^{i-1}N_G[u_i]$,
and $N_G[u]$ denotes the closed neighborhood of $u$ in $G$.
Clearly, if $U$ forces a unique perfect matching in $G$, then $G$ has a unique perfect matching 
$u_1v_1,\ldots,u_kv_k$, where $v_i$ is the only neighbor of $u_i$ in $G_i$ for $i\in [k]$.
As shown by Golumbic, Hirst, and Lewenstein (Theorem 3.1 in \cite{gohile}), 
a bipartite graph $G$ has a unique perfect matching 
if and only if some set forces a unique perfect matching in $G$;
their result actually implies that both partite sets of $G$ force a unique perfect matching.
This equivalence easily extends to cographs and split graphs.

\begin{theorem}
If $G$ is a cograph or a split graph, then $G$ has a unique perfect matching 
if and only if some set forces a unique perfect matching in $G$.
\end{theorem}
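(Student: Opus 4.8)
The plan is to prove only the forward (``only if'') implication, since the excerpt already observes that a forcing set trivially produces a unique perfect matching. First I would reduce the whole theorem to one structural claim: \emph{every cograph and every split graph $G$ with a unique perfect matching (and at least one vertex) has a vertex of degree $1$}. Granting this, the theorem follows by induction on $k=n(G)/2$. If $u_1$ has degree $1$, then $N_G[u_1]=\{u_1,v_1\}$ and the edge $u_1v_1$ lies in every perfect matching, so $G'=G-\{u_1,v_1\}$ again has a unique perfect matching; since cographs and split graphs are hereditary, $G'$ stays in the class, and a forcing sequence $u_2,\dots,u_k$ for $G'$ extends to a forcing sequence $u_1,u_2,\dots,u_k$ for $G$ (one checks that $G_i$, computed in $G$, coincides with the corresponding reduced graph of $G'$, because the vertices deleted before step $i$ are the same). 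The recurring tool will be the standard fact that, for a graph with a perfect matching $M$, the matching $M$ is unique if and only if there is no $M$-alternating cycle; every degree argument below is obtained by exhibiting a forbidden alternating $4$-cycle.

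For split graphs, write $V(G)=C\cup I$ with $C$ a clique and $I$ independent (we may assume $I\neq\emptyset$, since otherwise $G$ is a clique and uniqueness forces $G=K_2$). As $I$ is independent, $M$ matches $I$ injectively into $C$; let $S\subseteq C$ be its image, so $|S|=|I|$ and the vertices of $C\setminus S$ are matched among themselves. If $|C\setminus S|\ge 4$, two internal matching edges plus two clique edges make an alternating $4$-cycle, so $|C\setminus S|\in\{0,2\}$. The bipartite graph $H$ on parts $I$ and $S$ (keeping only $I$--$S$ edges) inherits a unique perfect matching, so by the theorem of Golumbic, Hirst, and Lewenstein the part $I$ forces it; in particular some $a\in I$ has exactly one neighbour in $S$. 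The only thing that could spoil $\deg_G(a)=1$ is an edge from $a$ to the possible leftover pair $\{p,q\}=C\setminus S$, but an edge $ap$ together with $pq\in M$, the matching edge at $a$, and a clique edge closes an alternating $4$-cycle; hence no vertex of $I$ is adjacent to $p$ or $q$, and $\deg_G(a)=1$.

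For cographs I would pass to a connected component (a disconnected graph has a unique perfect matching iff each component does, and one degree-$1$ vertex suffices), and use that a connected cograph on at least two vertices is a join $G=A\vee B$ with $A,B\neq\emptyset$. Counting the cross matching edges $c$ between $A$ and $B$: if $c\ge 2$ one swaps two cross edges along an alternating $4$-cycle, and if $c=0$ (which forces both sides even and of size $\ge 2$) one swaps an internal edge of $A$ against one of $B$; both contradict uniqueness. Hence $c=1$, and a parity check on the within-side matchings forces $\min\{|A|,|B|\}=1$, say $B=\{w\}$ with $w$ adjacent to all of $A$. Finally I would show $A$ has an isolated vertex: otherwise the cross-matched vertex $a_1$ has a neighbour $z\in A$, and $wa_1\in M$, the matching edge $zz'$ at $z$, and the join edges $a_1z$ and $z'w$ form an alternating $4$-cycle. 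An isolated vertex of $A$ has $w$ as its only neighbour, and so has degree $1$ in $G$.

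The main obstacle in both cases is not locating a candidate pendant vertex but certifying that it really has degree $1$ in $G$ rather than merely in some derived subgraph. The alternating-$4$-cycle arguments that rule out edges to the leftover clique pair $\{p,q\}$ (split case) and that force an isolated vertex of $A$ (cograph case) are exactly what upgrade a low-degree vertex of a bipartite or join part into a genuine degree-$1$ vertex of $G$, and they are where the hypotheses are truly used: the degree-$1$ conclusion is false for general graphs, as the graph consisting of two triangles joined by a single edge shows (it has a unique perfect matching but minimum degree $2$).
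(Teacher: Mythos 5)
Your overall strategy is the same as the paper's: reduce the theorem, via induction and heredity, to showing that every connected cograph or split graph with a unique perfect matching has a vertex of degree $1$, and produce that vertex by exhibiting forbidden $M$-alternating $4$-cycles. Your split-graph half is correct and in fact more detailed than the paper's, which simply asserts that if no vertex of the independent set has degree $1$ then an $M$-alternating cycle exists ``similarly as for bipartite graphs.'' Your route --- restricting to the bipartite graph $H$ between $I$ and the image $S$ of $I$ under $M$ (which indeed inherits a unique perfect matching, since any other perfect matching of $H$ extends by the edges of $M$ inside $C\setminus S$), invoking Golumbic--Hirst--Lewenstein as a black box to get $a\in I$ with a single neighbour in $S$, and then ruling out edges from $I$ to the leftover pair $\{p,q\}$ by an alternating $4$-cycle --- is a legitimate and slightly different way to discharge that step.

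There is, however, one step in the cograph case that fails as stated: the claim that once $c=1$, ``a parity check on the within-side matchings forces $\min\{|A|,|B|\}=1$.'' Parity only gives that $|A|$ and $|B|$ are both odd; it does not exclude, for instance, $|A|=|B|=3$ with a single cross edge of $M$. To eliminate that configuration you need the internal-versus-internal swap a third time: if $c=1$ and both sides have order at least $2$, then both have odd order at least $3$, so at least two vertices of $A$ and at least two vertices of $B$ are matched inside their own sides, whence $M$ contains an internal edge of $A$ and an internal edge of $B$, and these two edges together with two join edges form an $M$-alternating $4$-cycle. This is exactly the swap you already use for $c=0$, so the repair is a one-line addition; note that the paper organizes the case analysis to avoid the issue, arguing that if both join parts have order at least $2$ then $M$ contains either two cross edges or one internal edge of each part, and only then concluding that one part is a single universal vertex. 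The remainder of your cograph argument --- that the vertex matched to the universal vertex $w$ can have no neighbour inside $A$ --- coincides with the paper's and is correct.
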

\begin{proof}
Since the sufficiency is obvious, we proceed to the proof of the necessity.
Therefore, let $G$ be a cograph or a split graph with a unique perfect matching $M$.
In view of an inductive argument, and since the classes of cographs and of split graphs are both hereditary,
it suffices to consider the case that $G$ is a connected graph of order at least $4$, and to show that $G$ has a vertex of degree $1$. 

First, suppose that $G$ is a cograph.
Since $G$ is connected, it is the join of two graphs $G_1$ and $G_2$.
If $G_1$ and $G_2$ both have order at least $2$, then $M$ contains 
either two edges between $V(G_1)$ and $V(G_2)$,
or one edge of $G_1$ as well as one edge of $G_2$.
In both cases, these two edges are part of an $M$-alternating cycle of length $4$, which is a contradiction.
Hence, we may assume that $V(G_2)$ contains exactly one vertex $v$, 
which is a universal vertex in $G$.
Let $u$ be such that $uv\in M$.
If $u'$ is a neighbor of $u$ in $G_1$, and $u'v'\in M$,
then $uu'v'vu$ is an $M$-alternating cycle of length $4$, which is a contradiction.
Hence, the vertex $u$ has degree $1$ in $G$.

Next, suppose that $G$ is a split graph.
Let $V(G)=S\cup C$, where $S$ is an independent set, and $C$ is a clique that is disjoint from $S$.
Since $G$ has a unique perfect matching, it follows easily that $|C|-|S|$ is either $0$ or $2$.
Since $G$ has order at least $4$, the set $S$ is not empty.
If no vertex in $S$ has degree $1$, then it follows, similarly as for bipartite graphs, that $G$ contains an $M$-alternating cycle,
which completes the proof. 
\end{proof}
If $G$ is given by neighborhood lists, then it is straightforward to decide the existence of a set that forces a unique perfect matching in $G$ in linear time,
by iteratively identifying a vertex of degree $1$, and removing this vertex together with its neighbor from $G$. 
Altogether, for a given cograph or split graph, one can decide in linear time whether it has a unique perfect matching, and also find that matching, if it exists.

Our next results concern interval graphs.

\begin{lemma}\label{lemma2}
Let $G$ be an interval graph with a unique perfect matching $M$, and 
let $\Big([\ell_u,r_u]\Big)_{u\in V(G)}$ be an interval representation of $G$ 
such that all $2n(G)$ endpoints of the intervals $[\ell_u,r_u]$ for $u\in V(G)$ are distinct.

If $u^*\in V(G)$ is such that $r_{u^*}=\min\{ r_u:u\in V(G)\}$, 
and
$v^*\in N_G(u^*)$ is such that $r_{v^*}=\min\{ r_v:v\in N_G(u^*)\}$,
then $u^*v^*\in M$.
\end{lemma}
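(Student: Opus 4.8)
The plan is to argue by contradiction: I would assume $u^*v^*\notin M$ and build an $M$-alternating cycle of length $4$, contradicting the uniqueness of $M$ (a perfect matching is unique precisely when the graph admits no alternating cycle with respect to it). Since $v^*\in N_G(u^*)$, the edge $u^*v^*$ is present, so under this assumption $u^*$ is matched to some $w\ne v^*$, and $v^*$ is matched to some $z$. A preliminary check shows that $u^*,w,z,v^*$ are pairwise distinct: $z=u^*$ would force $u^*v^*\in M$, and $z=w$ is impossible because $w$ and $z$ would then share the two distinct $M$-partners $u^*$ and $v^*$.

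The heart of the argument is to prove that $w$ and $z$ are adjacent, i.e.\ that their intervals overlap. Once this holds, $u^*wzv^*u^*$ is the desired alternating $4$-cycle, with $u^*w,zv^*\in M$ and $wz,v^*u^*\notin M$, and replacing $\{u^*w,zv^*\}$ by $\{wz,v^*u^*\}$ yields a second perfect matching. To establish adjacency I would exploit the two extremal choices through the interval geometry. Because $r_{u^*}$ is the global minimum right endpoint and all endpoints are distinct, every neighbor of $u^*$ has left endpoint strictly below $r_{u^*}$; in particular $\ell_w<r_{u^*}$, and likewise $\ell_z<r_{v^*}$ since $z\in N_G(v^*)$. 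On the other hand, minimality of $r_{u^*}$ gives $r_{u^*}<r_z$, and the choice of $v^*$ as the neighbor of $u^*$ with smallest right endpoint gives $r_{v^*}<r_w$ (using $w\ne v^*$).

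Combining these four inequalities finishes the adjacency: $\ell_w<r_{u^*}<r_z$ yields $\ell_w<r_z$, while $\ell_z<r_{v^*}<r_w$ yields $\ell_z<r_w$, and these are exactly the conditions for the intervals of $w$ and $z$ to intersect, so $wz\in E(G)$ and the cycle is genuine.

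I expect the main obstacle to be bookkeeping rather than conceptual: the whole proof rests on correctly tracking which endpoint is forced below or above which, and on verifying at the outset that $u^*,w,z,v^*$ are four distinct vertices so that the alternating cycle is real. The extremal definitions of $u^*$ and $v^*$ enter in exactly two places — to bound $\ell_w$ and $r_z$ via the global minimum, and to bound $r_w$ via the minimum among neighbors — and it is precisely the interplay of these two bounds that produces the overlap of $w$ and $z$.
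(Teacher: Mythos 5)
Your proposal is correct and follows essentially the same route as the paper's proof: assume $u^*v^*\notin M$, take the $M$-partners of $u^*$ and $v^*$, use the two extremal choices together with the interval geometry to show those partners are adjacent, and conclude with an $M$-alternating cycle of length $4$ contradicting uniqueness. Your version simply spells out the endpoint inequalities and the distinctness of the four vertices, which the paper leaves implicit.
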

\begin{proof}
Suppose, for a contradiction, that $u^*v\in M$ for some neighbor $v$ of $u^*$ that is distinct from $v^*$. Let $u$ be such that $uv^*\in M$.
By the choice of $u^*$ and $v^*$, and since the intervals $[\ell_u,r_u]$ and $[\ell_{v^*},r_{v^*}]$ intersect, also the intervals $[\ell_u,r_u]$ and $[\ell_v,r_v]$ intersect, that is, $uv\in E(G)$.
Now, $u^*vuv^*u^*$ is an $M$-alternating cycle of length $4$, 
which is a contradiction. 
\end{proof}
Since, for a given interval graph, 
an interval representation as in Lemma \ref{lemma2} 
can be found in linear time \cite{hamcpavi},
Lemma \ref{lemma2} yields a simple linear time algorithm 
to determine a perfect matching in a given interval graph $G$,
provided that $G$ has a unique perfect matching.

We proceed to claw-free graphs.

Let $G$ be a graph.
Let $P:u_1\ldots u_k$ be a path in $G$, where we consider $u_k$ to be the {\it last} vertex of $P$.
We consider two operations replacing $P$ with a longer path $P'$ in $G$.
\begin{itemize}
\item $P'$ arises by applying an {\it end-extension} to $P$, if $P'$ is the path $u_1\ldots u_kv$, where $v$ is some neighbor of $u_k$ that does not lie on $P$.
\item $P'$ arises by a {\it swap-extension} to $P$, if $k\geq 3$, and $P'$ is the path 
$$u_1\ldots u_{k-2}u_ku_{k-1}v,$$
where $v$ is some neighbor of $u_{k-1}$ that does not lie on $P$.
Note that $u_{k-2}$ and $u_k$ need to be adjacent for this operation.
\end{itemize}
The following lemma is a simple variation of a folklore proof of Sumner's result \cite{su} that connected claw-free graphs of even order have a perfect matching.
\begin{lemma}\label{lemma1}
If $G$ is a connected claw-free graph of even order, and $P:u_1\ldots u_k$ is a path in $G$ that does not allow an end-extension or a swap-extension, then the edge $u_{k-1}u_k$ belongs to some perfect matching of $G$.
\end{lemma}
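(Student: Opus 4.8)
The plan is to induct on the even order $n(G)$. Since $G$ is connected with $n(G)\ge 2$, a single-vertex path always admits an end-extension, so the absence of an end-extension forces $k\ge 2$ (hence the edge $u_{k-1}u_k$ exists) and moreover pins every neighbour of $u_k$ onto $P$, i.e. $N_G(u_k)\subseteq\{u_1,\dots,u_{k-1}\}$. The crux will be to show that $G':=G-\{u_{k-1},u_k\}$ is connected. Granting this, $G'$ is a connected claw-free graph of even order $n(G)-2$; I would then select any path of $G'$, repeatedly apply end- and swap-extensions (each of which strictly increases the number of vertices on the path) until neither applies, and invoke the induction hypothesis on this path to obtain a perfect matching $M'$ of $G'$. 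Then $M'\cup\{u_{k-1}u_k\}$ is the required perfect matching of $G$. The base case $n(G)=2$ is immediate, since $P$ is then the single edge of $K_2$.

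The essential use of claw-freeness and of the swap-extension ban occurs in proving connectivity of $G'$; assume first that $k\ge 3$. Suppose $u_{k-1}$ has a neighbour $w\notin V(P)$. If $u_{k-2}u_k\in E(G)$, then a swap-extension using $w$ would be available, so the prohibition of swap-extensions yields $u_{k-2}u_k\notin E(G)$. Now examine the three neighbours $u_{k-2},u_k,w$ of $u_{k-1}$: we have $u_kw\notin E(G)$ (because $N_G(u_k)\subseteq V(P)$) and $u_{k-2}u_k\notin E(G)$, so claw-freeness forces the remaining pair to be adjacent, i.e. $u_{k-2}w\in E(G)$. Hence every off-path neighbour of $u_{k-1}$ is adjacent to $u_{k-2}$.

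To finish the connectivity claim, note that the subpath $Q:u_1\dots u_{k-2}$ is connected and contained in $G'$. Every neighbour of $u_k$ other than $u_{k-1}$ lies on $Q$, every on-path neighbour of $u_{k-1}$ other than $u_k$ lies on $Q$, and every off-path neighbour of $u_{k-1}$ is adjacent to the endpoint $u_{k-2}$ of $Q$; thus in $G'$ every neighbour of $\{u_{k-1},u_k\}$ is joined to $Q$. If some component of $G'$ avoided $Q$, then connectedness of $G$ would produce a vertex of that component adjacent to $u_{k-1}$ or $u_k$, which would therefore be joined to $Q$ inside $G'$, a contradiction. So $G'$ is connected. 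The boundary case $k=2$ must be handled separately: there $u_k$ has degree $1$, and claw-freeness at $u_{k-1}=u_1$ forces $N_G(u_1)\setminus\{u_2\}$ to be a clique, which likewise keeps $G-\{u_1,u_2\}$ connected.

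The main obstacle is exactly this connectivity claim, and the design of the two operations is what makes it go through: the end-extension ban confines all neighbours of $u_k$ to $P$, while the swap-extension ban is tailored to exclude the edge $u_{k-2}u_k$, and it is precisely this exclusion that lets the claw-free condition at $u_{k-1}$ redirect every external neighbour of $u_{k-1}$ onto $u_{k-2}$.
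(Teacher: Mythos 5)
Your proof is correct and takes essentially the same route as the paper: both reduce the lemma to showing that $G-\{u_{k-1},u_k\}$ remains connected and then induct, using the claw centered at $u_{k-1}$ with leaves $u_{k-2}$, $u_k$, and an off-path neighbour, together with the two extension bans. The only difference is organizational: you argue connectivity directly (the swap ban excludes $u_{k-2}u_k$, so claw-freeness glues every off-path neighbour of $u_{k-1}$ to $u_{k-2}$), whereas the paper assumes disconnection and derives an available swap-extension as a contradiction.
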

\begin{proof}
In view of an inductive argument, 
it suffices to show that $G'=G-\{ u_{k-1},u_k\}$ is connected.
Suppose, for a contradiction, that $G'$ is not connected.
Clearly, $k\geq 2$.
If $k=2$, then $u_1$ has neighbors in two components of $G'$
while $u_2$ is only adjacent to $u_1$,
which yields a claw centered at $u_1$.
Now, let $k\geq 3$.
The path $u_1\ldots u_{k-2}$ lies in one component $K$ of $G'$.
Let $K'$ be a component of $G'$ that is distinct from $K$.
Since $P$ allows no end-extension, $u_k$ has no neighbor in $K'$.
Hence, $u_{k-1}$ has a neighbor $v$ in $K'$.
Since $u_{k-2}$ and $v$ are not adjacent, and $G$ is claw-free,
$u_k$ is adjacent to $u_{k-2}$, and $P$ allows a swap-extension,
which is a contradiction.
\end{proof}
Lemma \ref{lemma1} is the basis for the simple greedy algorithm \texttt{PMinCF} (cf. Algorithm \ref{alg1})
that determines a perfect matching in connected claw-free graphs of even order.

\begin{algorithm}[H]
\LinesNumbered\SetAlgoLined
\KwIn{A connected claw-free graph $G$ of even order.}
\KwOut{A perfect matching $M$ of $G$.}
\Begin{
$M\leftarrow \emptyset$; $k\leftarrow 0$\label{l2}\;
$\texttt{lm$\_$nb}(u)\leftarrow -1$ for all vertices $u$ of $G$\label{l4}\; 
\While{$n(G)\geq 2$\label{l17}}
{
\lIf{$k=0$}
{$k\leftarrow 2$; $u_1\leftarrow u$; $u_2\leftarrow v$, where $uv$ is some edge of $G$\label{l3}}
\Repeat{$\texttt{extend}=0$\label{l20}}
{\label{l18}
$\texttt{extend}\leftarrow 0$\label{l5}\;
\eIf{$\exists v\in N_G(u_k)\setminus \{ u_1,\ldots,u_k\}$\label{l6}}
{
$u_{k+1}\leftarrow v$; 
$k\leftarrow k+1$;
$\texttt{extend}\leftarrow 1$\label{l7}\;
}
{
\If{\rm $\texttt{lm$\_$nb}(u_k)=-1$\label{l8}}
{$\texttt{lm$\_$nb}(u_k)\leftarrow \max\{ \ell\in [k-1]:u_{k-i}\in N_G(u_k)\mbox{ for }i\in [\ell]\}$\label{l9}\;}
\If{\rm $\texttt{lm$\_$nb}(u_k)\geq 2$ and $\exists v\in N_G(u_{k-1})\setminus \{ u_1,\ldots,u_k\}$\label{l10}}
{$\texttt{lm$\_$nb}(u_k)\leftarrow\texttt{lm$\_$nb}(u_k)-1$\label{l13}\;
\If{\rm $\texttt{lm$\_$nb}(u_{k-1})\not=-1$\label{l14}}
{$\texttt{lm$\_$nb}(u_{k-1})\leftarrow\texttt{lm$\_$nb}(u_{k-1})+1\label{l15}$}
$x\leftarrow u_k$; 
$y\leftarrow u_{k-1}$\label{l11}\;
$u_{k-1}\leftarrow x$; 
$u_k\leftarrow y$;
$u_{k+1}\leftarrow v$\label{l12}\;
$k\leftarrow k+1$;
$\texttt{extend}\leftarrow 1$\label{l21}\;
}
}
}
$M\leftarrow M\cup \{ u_{k-1}u_k\}$;
$G\leftarrow G-\{ u_{k-1},u_k\}$;
$k\leftarrow k-2$\label{l16}\;
}\label{l19}
\Return $M$\;
}
\caption{\texttt{PMinCF}}\label{alg1}\end{algorithm}

\begin{theorem}\label{theorem1}
The algorithm \texttt{PMinCF} works correctly and can be implemented to run in $O(m(G))$ time for a given connected claw-free graph $G$ of even order.
\end{theorem}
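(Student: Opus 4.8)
The plan is to establish correctness and the $O(m(G))$ bound separately, in both cases reasoning about the current path $P:u_1\ldots u_k$ and the array \texttt{lm\_nb}. For correctness I would maintain, as an invariant checked at the top of the inner \textbf{repeat}-loop, that $u_1\ldots u_k$ is a path of the present graph $G$ and that for each path vertex $u=u_p$ the entry \texttt{lm\_nb}$(u)$ is either $-1$ or the true run length $\max\{\ell\in[p-1]:u_{p-i}\in N_G(u)\text{ for }i\in[\ell]\}$. The engine behind the routine cases is that a vertex leaves $P$ only by being deleted from $G$ in line \ref{l16}; hence any vertex present in $G$ but off $P$ has never been on $P$, so its entry is still the initial $-1$. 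This settles the invariant right after a starting edge is chosen (line \ref{l3}) and after an end-extension (line \ref{l7}), where the appended vertex is fresh, and after the deletion of $u_{k-1}u_k$ (line \ref{l16}), where the surviving prefix $u_1\ldots u_{k-2}$ keeps all of its predecessors.

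The only delicate case is a swap-extension, where I would check the two constant-time updates directly against the run-length formula. Exchanging positions $k-1$ and $k$ strips the adjacent predecessor $u_{k-1}$ from the front of the run of the former last vertex $u_k$, lowering its length by exactly $1$ (line \ref{l13}), and inserts that same $u_k$ at the front of the run of the former $u_{k-1}$, raising its length by exactly $1$ (line \ref{l15}); leaving a stored $-1$ alone is harmless, since the true value is recomputed on demand. As the swap branch is entered in line \ref{l10} only when \texttt{lm\_nb}$(u_k)\ge 2$, this value has just been produced in line \ref{l9}, so the decrement acts on a correct number. Under the invariant the test \texttt{lm\_nb}$(u_k)\ge 2$ is equivalent to $u_{k-2}u_k\in E(G)$---the path edge $u_{k-1}u_k$ already forces run length at least $1$---so the \textbf{repeat}-loop halts exactly when $P$ admits neither an end- nor a swap-extension. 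Lemma \ref{lemma1} then places $u_{k-1}u_k$ in a perfect matching of $G$, while its proof shows that $G-\{u_{k-1},u_k\}$ stays connected; as claw-freeness and even order are hereditary, the property of being connected, claw-free, and of even order is preserved, and an induction on $n(G)$ yields that \texttt{PMinCF} returns a perfect matching.

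For the running time I would first observe that each vertex is appended to $P$ exactly once, so the end- and swap-extensions together number at most $n(G)$; with exactly $n(G)/2$ deletions, the \textbf{repeat}-loop is executed $O(n(G))$ times. The two searches for an external neighbor---from $u_k$ in line \ref{l6} and from $u_{k-1}$ in line \ref{l10}---I would drive by a single nondecreasing pointer per vertex into its adjacency list, shared by both searches since a vertex is scanned only while it is the last or the second-to-last vertex of $P$. The key claim, which I would isolate, is that an entry once passed over because it lies on $P$ or is deleted can never again be an admissible neighbor, precisely because a path vertex leaves $P$ only through deletion; hence every adjacency entry is inspected $O(1)$ times and all searches together cost $O(m(G))$. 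Line \ref{l9} runs at most once per vertex, since afterwards \texttt{lm\_nb}$(u_k)\ge 1$ and this is never reset to $-1$; implemented by first marking the neighbors of $u_k$ and then walking back along $P$, it costs $O(d_G(u_k))$, summing to $O(m(G))$. Choosing a fresh edge when $k=0$ (line \ref{l3}) is served by a second monotone pointer traversing the vertex set once---legitimate because the surviving graph stays connected and hence has no isolated vertex---and with all remaining work $O(1)$ per step the total is $O(m(G))$.

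The step I expect to be hardest is the amortized analysis of the neighbor searches: arguing that one monotone pointer per vertex suffices even though that vertex is scanned both as $u_k$ and as $u_{k-1}$, and that entries skipped because they currently lie on $P$ need never be revisited. This ``skipped stays inadmissible'' claim, resting on the fact that vertices leave $P$ only by deletion, is exactly what collapses the total search cost to $O(m(G))$. On the correctness side the subtler point is that the two constant-time updates in lines \ref{l13} and \ref{l15} genuinely preserve \texttt{lm\_nb} through a swap, which is why I would verify them against the explicit run-length formula rather than trust an appeal to symmetry.
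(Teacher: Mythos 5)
Your proposal is correct and follows essentially the same route as the paper's proof: correctness via Lemma \ref{lemma1} (the \textbf{repeat}-loop ensures $P$ admits no end- or swap-extension, and connectivity is preserved), together with maintenance of $\texttt{lm\_nb}$ through lines \ref{l9}, \ref{l13}, and \ref{l15}, and the $O(m(G))$ bound via monotone, never-rewound traversal of each adjacency list justified by the fact that vertices leave $P$ only by deletion. You merely make explicit (as a stated invariant and a verified ``skipped stays inadmissible'' claim) what the paper argues informally.
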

\begin{proof}
Line \ref{l2} initializes the matching $M$ as empty and the order $k$ of the path $P:u_1\ldots u_k$ as $0$.
The {\bf while}-loop in lines \ref{l17} to \ref{l19} extends the matching iteratively as long as possible using the last edge $u_{k-1}u_k$ of the path $P:u_1\ldots u_k$.
If $k=0$, which happens in the first execution of the {\bf while}-loop, and possibly also in later executions, then, in line \ref{l3}, 
the path $P$ is reinitialized with $k=2$ using any edge $uv$ of $G$.
The {\bf repeat}-loop in lines \ref{l18} to \ref{l20} ensures that $P$ allows no end-extension and no swap-extension, which, by Lemma \ref{lemma1}, implies the correctness of \texttt{PMinCF}. 
The proof of Lemma \ref{lemma1} actually implies that $G$ stays connected throughout the execution of \texttt{PMinCF}.
In line \ref{l6} we check for the possibility of an end-extension,
which, if possible, is performed in line \ref{l7}.
If no end-extension is possible, we check for the possibility of a swap-extension.
The first time that some specific vertex $u_k$ is the last vertex of $P$,
and we check for the possibility of a swap-extension,
we set $\texttt{lm$\_$nb}(u_k)$ to the largest integer $\ell$ 
such that $u_k$ is adjacent to $u_{k-1},\ldots,u_{k-\ell}$.
Initializing $\texttt{lm$\_$nb}(u)$ as $-1$ for every vertex $u$ of $G$ in line \ref{l4}
indicates that its correct value has not yet been determined.
This happens for the first time in lines \ref{l8} and \ref{l9}.
Once $\texttt{lm$\_$nb}(u_k)$ has been determined,
it is only updated in line \ref{l13} for $u_k$,
and, if necessary, in line \ref{l14} for $u_{k-1}$.
Clearly, $u_k$ is adjacent to $u_{k-2}$ if and only if $\texttt{lm$\_$nb}(u_k)\geq 2$.
Therefore, line \ref{l10} correctly checks for the possibility of a swap-extension,
which, if possible, is performed in lines \ref{l11}, \ref{l12}, and \ref{l21}.
Altogether, the correctness follows, 
and it remains to consider the running time.

We assume that $G$ is given by neighborhood lists, that is, 
for every vertex $u$ of $G$, 
the elements of the neighborhood $N_G(u)$ of $u$ in $G$ 
are given as an (arbitrarily) ordered list.
Checking for the existence of a suitable vertex $v$ 
within the {\bf if}-statements in lines \ref{l6} and \ref{l10}
can be implemented in such a way that we 
traverse the neighborhood list of every vertex at most once
throughout the entire execution of \texttt{PMinCF}. 
Every time we check for the existence of such a neighbor $v$ of $u_k$,
we only need to consider the neighbors of $u_k$ 
that have not been considered before,
that is, we start with the first not yet considered neighbor of $u_k$ within its neighborhood list, and continue until we either find a suitable neighbor $v$ or reach the end of the list.
Since vertices that leave $P$ are also removed from $G$ in line \ref{l16},
this is correct, and the overall effort spent on checking for such neighbors is proportional to the sum of all vertex degrees, that is, $O(m(G))$.
The first computation of $\texttt{lm$\_$nb}(u_k)$ in line \ref{l9}
can easily be done in $O(d_G(u_k))$ time.
After that, every update of $\texttt{lm$\_$nb}(u_k)$ only requires constant effort.
Since $P$ is extended exactly $n(G)-2$ times,
the overall effort spent on maintaining $\texttt{lm$\_$nb}(u_k)$
is again proportional to the sum of all vertex degrees.
Altogether, it follows that the running time is $O(m(G))$,
which completes the proof.
\end{proof}
Again, it follows using \cite{gakata} that one can decide in linear time whether a given claw-free graph has a unique perfect matching.

Our final goal is a constructive characterization of the claw-free graphs 
that have a unique perfect matching
Let ${\cal G}$ be the class of graphs $G$ obtained by 
starting with $G$ equal to $K_2$,
and iteratively applying the following two operations:
\begin{itemize}
\item {\bf Operation 1}\\
Add to $G$ two new vertices $x$ and $y$, and the three new edges $xy$, $xu$, and $yu$,
where $u$ is a simplicial vertex of $G$.
\item {\bf Operation 2}\\
Add to $G$ two new vertices $x$ and $y$, the new edge $xy$,
and new edges between $x$ and all vertices in a set $C$,
where $C$ is a non-empty clique in $G$ such that 
$N_G(u)\setminus C$ is a clique for every vertex $u$ in $C$.
\end{itemize}

\begin{theorem}\label{theorem2}
A connected claw-free graph $G$ has a unique perfect matching if and only if $G\in {\cal G}$.
\end{theorem}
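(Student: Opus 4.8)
The plan is to prove both implications, the forward (soundness of the construction) direction by induction on the number of operations, and the harder converse by induction on $n(G)$. For the direction $G\in{\cal G}\Rightarrow G$ is connected, claw-free, and has a unique perfect matching, the base case $K_2$ is clear, and for the inductive step I verify that each operation preserves all three properties. Connectivity is immediate, since the new vertices attach to old ones. For uniqueness of the perfect matching, in Operation~2 the vertex $y$ has degree $1$, which forces $xy$ into every perfect matching, while in Operation~1 the triangle on $x,y,u$ forces $xy$ as well (matching $x$ or $y$ to $u$ would leave the other unmatched); deleting $\{x,y\}$ then recovers the unique matching of the smaller graph. Claw-freeness is exactly where the side conditions enter: a new claw could only be centered at $u$ (Operation~1) or at $x$ or a vertex of $C$ (Operation~2), and in each case the hypothesis that the relevant neighborhood is a clique rules out an independent triple.

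For the converse I induct on $n(G)$, with base $G=K_2$. Let $G$ be connected and claw-free with unique perfect matching $M$ and $n(G)\geq 4$, and split into two cases. In Case~A, $G$ has a vertex $y$ of degree $1$ with neighbor $x$; I set $C=N_G(x)\setminus\{y\}$, and claw-freeness at $x$ forces $C$ to be a clique, while claw-freeness at each $u\in C$ forces $N_G(u)\setminus(C\cup\{x\})$ to be a clique. These are precisely the preconditions of Operation~2. Since $x$ is simplicial in $G-y$, the graph $G'=G-\{x,y\}$ is connected, and $M-xy$ is its unique perfect matching; hence $G$ is obtained from $G'$ by Operation~2.

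In Case~B, $\delta(G)\geq 2$. By Kotzig's classical theorem $G$ has a bridge, so $G$ is not $2$-connected and possesses a leaf (end) block $L$ with unique cut vertex $w$; since $\delta(G)\geq 2$, $L\neq K_2$, so $L$ is $2$-connected with $|L|\geq 3$, and every vertex of $L-w$ has all its neighbors inside $L$. If $w$ were matched inside $L$, then $M\cap L$ would be a unique perfect matching of the standalone graph $L$, contradicting Kotzig (a $2$-connected graph on at least three vertices has no bridge); hence $w$ is matched to a vertex outside $L$ that is non-adjacent to $N_L(w)$, and claw-freeness at $w$ forces $N_L(w)$ to be a clique, i.e.\ $w$ is simplicial in $L$. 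I then argue that a $2$-connected graph with a simplicial vertex $w$ for which $L-w$ is connected with a unique perfect matching must be a triangle: Kotzig yields a bridge $pq$ of $L-w$, and since $L$ itself has no bridge, $w$ must reconnect its two sides, which (as $N_L(w)$ is a clique) forces $N_L(w)=\{p,q\}$ and, by $2$-connectivity, collapses each side to a single vertex, so $L=\{w,p,q\}=K_3$. Now $p,q$ have degree $2$, $pq\in M$, and claw-freeness at $w$ makes $N_G(w)\setminus\{p,q\}$ a clique; thus $G$ arises by Operation~1 at the simplicial vertex $w$ of the connected claw-free graph $G'=G-\{p,q\}$, which has unique perfect matching $M-pq$. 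In both cases $G'\in{\cal G}$ by induction, whence $G\in{\cal G}$.

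The step I expect to be the main obstacle is Case~B: the key idea is to localize to a leaf block rather than to a bridge of $M$ directly (the triangle removed by Operation~1 need not sit on a bridge), and then to prove that $2$-connectivity of $L$, together with simpliciality of its cut vertex (from claw-freeness) and the internal bridge supplied by Kotzig, forces $L$ to be a single triangle. The remaining work — checking in each reduction that the relevant second neighborhoods are cliques purely from claw-freeness, so that the preconditions of Operations~1 and~2 are met, and that the reduced graph stays connected with a unique perfect matching — is routine but must be carried out carefully to match the exact statements of the two operations.
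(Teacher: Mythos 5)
Your proposal is correct and takes essentially the same route as the paper: both directions go by induction, and the converse rests on Kotzig's theorem (a unique perfect matching forces a bridge, so $G$ is not $2$-connected) followed by an endblock/leaf-block analysis in which claw-freeness supplies the clique conditions needed for Operations 1 and 2, reducing to a $K_2$-type endblock (your Case A, Operation 2) or a $K_3$ endblock (your Case B, Operation 1). The only organizational difference is how endblocks of order at least $4$ are excluded: the paper derives contradictions by applying Kotzig to $B$ (even order) or to $B-u$ after showing $B-u$ is $2$-connected (odd order), whereas you apply Kotzig to $L-w$ and use the resulting bridge, the bridgelessness of $L$, and the clique $N_L(w)$ to force $L=K_3$ directly -- the same ingredients, assembled slightly differently.
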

\begin{proof}
It is easy to prove inductively that all graphs in ${\cal G}$ are connected, claw-free, and have a unique perfect matching. Note that requiring $u$ to be simplicial in Operation 1 ensures that no induced claw is created by this operation. Similarly, the conditions imposed on $C$ in Operation 2 ensure that no induced claw is created.  

Now, let $G$ be a connected claw-free graph with a unique perfect matching $M$.
If $G$ has order $2$, then, trivially, $G$ is $K_2$, which lies in ${\cal G}$.
Now, let $G$ have order at least $4$.
By Kotzig's theorem \cite{ko,ne}, $G$ has a bridge that belongs to $M$.
In particular, $G$ is not $2$-connected.
Let $B$ be an endblock of $G$.
If $n(B)\leq 3$, then $B$ is $K_2$ or $K_3$, and the claw-freeness of $G$ easily implies that $G$ arises from a proper induced subgraph of $G$ by applying Operation 1 or 2.
Hence, we may assume that $n(B)\geq 4$.
If $n(B)$ is even, then, by Kotzig's theorem,
$B$, and hence also $G$, has two distinct perfect matchings, 
which is a contradiction.
Hence, $n(B)$ is odd, that is $n(B)\geq 5$.
If $u$ is the cutvertex of $G$ in $B$, then, since $G$ is claw-free, $N_B(u)$ is a clique of order at least $2$. This implies that $B-u$ is $2$-connected.
Again, by Kotzig's theorem, $B-u$, and hence also $G$, has two distinct perfect matchings, 
which is a contradiction, and completes the proof.
\end{proof}
Note that Theorem 3.4 in \cite{washyu}, and also Theorem 3.2 in \cite{washyu} restricted to claw-free graphs, follow very easily from Theorem \ref{theorem2} by an inductive argument.

\medskip

\noindent {\bf Acknowledgment} We thank Vadim Levit for drawing our attention to the problem studied in this paper.

\end{document}